\newcommand{\set}[2]{\{ #1 \mid #2 \}}
\newcommand{\assign}{\mathrel{:=}}
\newcommand{\logic}{\mathrm}
\newcommand{\IL}{\logic{IL}}
\newcommand{\ML}{\logic{ML}}
\newtheorem*{fact}{Fact}
\newtheorem*{lemma}{Lemma}
\newtheorem*{theorem}{Theorem}
\title[Levin's and Prucnal's theorems]{Levin's~and~Prucnal's theorems on Medvedev's~logic of finite problems}
\author{Adam P\v{r}enosil}
\address{Universitat de Barcelona, Departament de Filosofia}
\email{adam.prenosil@gmail.com}
\begin{document}

\begin{abstract}
  The purpose of this note is to provide a transparent and unified retelling of both Skvortsov's proof of the structural completeness of Medvedev's logic of finite problems, which is a classical result originally due to Prucnal, and of Levin's proof that Medvedev's logic of finite problems is the largest extension of the (weak) Kreisel--Putnam logic with the disjunction property. Presenting both results together allows us to simplify their presentation, as they both hinge on the same lemma. There is no novel content in this note, its purpose is merely to present the material in a more accessible way.
\end{abstract}

\maketitle

  The theorems of Levin~\cite{Levin} and Prucnal~\cite{Prucnal} are two of the most important results on Medvedev's (propositional super-intuitionistic) logic of finite problems. The only ambition of this note is to present their existing proofs in a more reader-friendly manner. In the case of Prucnal's theorem, we follow the proof of Skvortsov~\cite{Skvortsov}.\footnote{Levin's theorem as stated in~\cite{Levin} is that Medvedev's logic is the largest extension $\logic{L}$ of the Kreisel--Putnam logic with the property that if $\varphi \vee \psi$ is a theorem of $\logic{L}$, then either $\varphi$ or $\psi$ is a theorem of classical logic. The variant of Levin's theorem which we prove here is in fact due to Maksimova~\cite{Maksimova}, who however uses a different method of proof and does not cite~\cite{Levin}.}

  Let $M_{n}$ denote the free meet semilattice generated by the set $\{ 1, \dots, n \}$. That is, $M_{n}$ is a finite Boolean algebra with $n$ coatoms minus the top element, in which the set of coatoms of the Boolean algebra is identified with $\{ 1, \dots, n \}$. Each element of~$M_{n}$ has the form $\bigwedge I$ for some non-empty set $I \subseteq \{ 1, \dots n \}$, and $\bigwedge I \leq \bigwedge J$ in~$M_{n}$ if and only if $J \subseteq I$. The~family of posets $M_{n}$, viewed as Kripke frames for intuitionistic logic, defines Medvedev's logic of finite problems $\ML$.
  
  Medvedev's logic enjoys the \emph{disjunction property}: for all formulas $\varphi$ and $\psi$
\begin{align*}
  \vdash_{\ML} \varphi \vee \psi \implies \text{either } \vdash_{\ML} \varphi \text{ or } \vdash_{\ML} \psi.
\end{align*}
  This is because each pair of Medvedev's frames $M_{m}$ and $M_{n}$ is isomorphic to a pair of disjoint generated subframes of~$M_{m+n}$. If $\varphi$ fails in some valuation on $M_{m}$ and $\psi$ fails in some valuation on $M_{n}$, then combining these into a single valuation on $M_{m+n}$ yields a counter-example to $\varphi \vee \psi$.

  The \emph{weak Kreisel--Putnam logic} extends intuitionistic logic by the~axiom
\begin{align*}
  (\neg p \rightarrow (\neg q \vee \neg r)) \rightarrow ((\neg p \rightarrow \neg q) \vee (\neg p \rightarrow \neg r)).
\end{align*}
   This axiom is easily seen to be valid in Medvedev's logic. Indeed, Medvedev's logic validates the stronger \emph{Kreisel--Putnam axiom}
\begin{align*}
  (\neg p \rightarrow (q \vee r)) \rightarrow ((\neg p \rightarrow q) \vee (\neg p \rightarrow r)).
\end{align*}
  
  We now define the \emph{Kreisel--Putnam rank} of a formula (of intuitionistic logic). The~rank of a formula of the form $\neg \varphi$ is $1$. If $\varphi$ and $\psi$ are formulas of ranks $m$ and $n$ respectively, then $\varphi \vee \psi$ has rank $m + n$, $\varphi \wedge \psi$ has rank $m \cdot n$, and $\varphi \rightarrow \psi$ has rank $n^{m}$. All other formulas have a Kreisel--Putnam rank of $\infty$.

\begin{fact}
  If $\varphi$ is a formula of Kreisel--Putnam rank $n$, then there are $\psi_{1}$, \dots, $\psi_{n}$ such that $\varphi$ is equivalent to $\neg \psi_{1} \vee \dots \vee \neg \psi_{n}$ in the weak Kreisel--Putnam logic.
\end{fact}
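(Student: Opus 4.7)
The plan is a straightforward induction on the rank of~$\varphi$ (equivalently, on the structure of~$\varphi$ among the formulas built from negated subformulas by $\vee$, $\wedge$, and~$\rightarrow$). The base case is immediate: a formula of rank~$1$ is of the form $\neg \psi$, which is itself already a one-term disjunction of negations.

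For the inductive step, the cases $\vee$ and $\wedge$ go through in pure intuitionistic logic. If $\varphi_{1} \equiv \neg \alpha_{1} \vee \dots \vee \neg \alpha_{m}$ and $\varphi_{2} \equiv \neg \beta_{1} \vee \dots \vee \neg \beta_{n}$ by the inductive hypothesis, then $\varphi_{1} \vee \varphi_{2}$ is trivially a disjunction of $m + n$ negations. For $\varphi_{1} \wedge \varphi_{2}$, I would distribute to obtain $\bigvee_{i,j} (\neg \alpha_{i} \wedge \neg \beta_{j})$ and then apply the intuitionistic equivalence $\neg \alpha \wedge \neg \beta \equiv \neg (\alpha \vee \beta)$ to each of the $mn$ conjunctions.

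The only case that genuinely uses the weak Kreisel--Putnam axiom is implication. Given $\varphi_{1} \equiv \neg \alpha_{1} \vee \dots \vee \neg \alpha_{m}$ and $\varphi_{2} \equiv \neg \beta_{1} \vee \dots \vee \neg \beta_{n}$, I would first observe that $\varphi_{1} \rightarrow \varphi_{2}$ is intuitionistically equivalent to $\bigwedge_{i} (\neg \alpha_{i} \rightarrow \varphi_{2})$. Applying the weak Kreisel--Putnam axiom iteratively on $n$ (which is legitimate because each disjunct of~$\varphi_{2}$ is already a negation), I would rewrite each conjunct as $\bigvee_{j} (\neg \alpha_{i} \rightarrow \neg \beta_{j})$, and then use the intuitionistic equivalence $(\neg \alpha \rightarrow \neg \beta) \equiv \neg (\neg \alpha \wedge \beta)$ to put each inner formula in negated form. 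At this point I have the normal form
\begin{align*}
  \bigwedge_{i=1}^{m} \bigvee_{j=1}^{n} \neg \gamma_{ij},
\end{align*}
which by intuitionistic distributivity is equivalent to $\bigvee_{f} \bigwedge_{i} \neg \gamma_{i,f(i)}$, the disjunction ranging over all $f \colon \{1,\dots,m\} \to \{1,\dots,n\}$. Collapsing each inner conjunction using $\bigwedge_{i} \neg \gamma_{i,f(i)} \equiv \neg \bigvee_{i} \gamma_{i,f(i)}$ yields a disjunction of exactly $n^{m}$ negations, as required.

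The main obstacle, and the only place the hypothesis on the logic is invoked, is thus the implication case: one needs the weak Kreisel--Putnam axiom precisely to pull a disjunction of negations out from the consequent of an implication whose antecedent is itself a negation. Everything else is bookkeeping in intuitionistic logic.
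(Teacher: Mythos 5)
Your induction reproduces the paper's proof essentially step for step: the $\vee$ and $\wedge$ cases are the same intuitionistic bookkeeping, and in the implication case you perform the same reduction of $\varphi_{1} \rightarrow \varphi_{2}$ to $\bigwedge_{i}(\neg\alpha_{i} \rightarrow \varphi_{2})$, split each conjunct into $\bigvee_{j}(\neg\alpha_{i} \rightarrow \neg\beta_{j})$, rewrite each $\neg\alpha_{i} \rightarrow \neg\beta_{j}$ as a single negation (your $\neg(\neg\alpha_{i} \wedge \beta_{j})$ is the correct equivalence; the paper's $\neg(\neg\psi_{i} \rightarrow \chi_{j})$ is a slip), and distribute to obtain $n^{m}$ disjuncts.

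However, the one step where you offer a justification that the paper does not is exactly the delicate one, and your justification does not hold up. To split $\neg\alpha_{i} \rightarrow (\neg\beta_{1} \vee \dots \vee \neg\beta_{n})$ by ``iterating'' the binary weak Kreisel--Putnam axiom you must, already at the first step, instantiate the axiom with the tail $\neg\beta_{2} \vee \dots \vee \neg\beta_{n}$ in place of $\neg r$; but that tail is a disjunction of negations, not a negation, and is not in general intuitionistically equivalent to one, so the axiom does not apply --- the fact that each \emph{individual} disjunct is a negation is not what the iteration needs. Replacing the tail by the weaker negation $\neg(\beta_{2} \wedge \dots \wedge \beta_{n})$ (equivalently $\neg\neg(\neg\beta_{2} \vee \dots \vee \neg\beta_{n})$) lets you apply the axiom, but in the resulting case $\neg\alpha_{i} \rightarrow \neg(\beta_{2} \wedge \dots \wedge \beta_{n})$ you cannot recover $\bigvee_{j \geq 2}(\neg\alpha_{i} \rightarrow \neg\beta_{j})$, so the recursion on $n$ does not close. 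Indeed the $n$-ary splitting does not follow from the binary axiom by any argument: the frame obtained from $M_{4}$ by deleting the single node $1 \wedge 2 \wedge 3$ validates the binary weak Kreisel--Putnam axiom (a failure at the root would always be witnessed by a meet of at most \emph{two} maximal elements, and no such node was deleted), yet with $p$ true only at the maximal element $4$ and $q_{1}, q_{2}, q_{3}$ true only at the maximal elements $\{3,4\}$, $\{2,4\}$, $\{1,4\}$ respectively, $\neg p \rightarrow (\neg q_{1} \vee \neg q_{2} \vee \neg q_{3})$ holds at the root while no disjunct $\neg p \rightarrow \neg q_{j}$ does. To be fair, the paper asserts the same $n$-ary equivalence without derivation, so you have not deviated from its strategy; but the parenthetical ``legitimate because each disjunct is already a negation'' does not supply the missing argument, and this is the one place in your write-up (and in the paper's proof) that genuinely needs repair rather than bookkeeping.
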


\begin{proof}
  This is proved by an easy induction over the complexity of $\varphi$. The~weak Kreisel--Putnam axiom is used in the case of $\varphi \assign \psi \rightarrow \chi$: if~$\psi$ is equivalent to $\neg {\psi}_{1} \vee \dots \vee \neg {\psi}_{m}$ and $\chi$ is equivalent to $\neg {\chi}_{1} \vee \dots \vee \neg {\chi}_{n}$ in the weak Kreisel--Putnam logic, then $\psi \rightarrow \chi$ is equivalent to the conjunction of the $m$ formulas $\neg {\psi}_{i} \rightarrow \chi$. Each of these is equivalent to the disjunction of the $n$ formulas $\neg {\psi}_{i} \rightarrow \neg {\chi}_{j}$, each of which is in turn equivalent to the rank~$1$ formula $\neg (\neg {\psi}_{i} \rightarrow {\chi}_{j})$. By the distributive law, $\psi \rightarrow \chi$ is equivalent to a disjunction of $n^{m}$ of these rank~$1$ formulas.
\end{proof}

  For each $n$ we may choose intuitionistic formulas $\alpha^{n}_{1}, \dots, \alpha^{n}_{n}$ and a valuation $u_{n}$ on the Medvedev frame $M_{n}$ such that
\begin{enumerate}[(i)]
\item $\vdash_{\IL} \neg (\alpha^{n}_{i} \wedge \alpha^{n}_{j})$ for $i \neq j$,
\item $\vdash_{\IL} \neg \neg (\alpha^{n}_{1} \vee \dots \vee \alpha^{n}_{n})$, and
\item $i \in u_{n}(\alpha^{n}_{j}) \iff i = j$.
\end{enumerate}
  That is, the formulas $\alpha^{n}_{i}$ are pairwise inconsistent, exhaustive in a weak sense, and in the valuation $u_{n}$ each of them holds in exactly one of the maximal elements of the frame $M_{n}$ (namely, $\alpha^{n}_{i}$ holds in the maximal element $i$).

  For example, for $n = 2^{m}$ we may take the formulas $\alpha^{n}_{i}$ to be all con\-junctions of $m$ formulas which contain exactly one of the formulas $p_{j}$ or $\neg p_{j}$ for ${1 \leq j \leq m}$. For $n = 2^{m} - k$ we may simply combine the last $k+1$ of these into a disjunction. A suitable valuation $u_{n}$ is then easy to find.

  For a non-empty set $I \subseteq \{ 1, \dots, n \}$ we define $\alpha^{n}_{I} = \neg \neg \bigvee_{i \in I} \alpha^{n}_{i}$. Observe that
\begin{align*}
  \bigwedge I \in u_{n}(\alpha^{n}_{J}) \iff I \subseteq J.
\end{align*}

  We show that the valuation $u_{n}$ is universal in the sense that any valuation on $M_{n}$ can be expressed by composing $u_{n}$ with some substitution~$\sigma$, which can moreover be chosen so that its image consists of formulas of finite Kreisel--Putnam rank.

\begin{lemma}
  Let $v_{n}$ be a valuation on $M_{n}$. Then there is a substitution $\sigma$ such that $v_{n}(\varphi) = u_{n}(\sigma(\varphi))$ and $\sigma(\varphi)$ has finite Kreisel--Putnam rank for each formula $\varphi$.
\end{lemma}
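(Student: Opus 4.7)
The strategy is to define a substitution $\sigma$ on propositional variables and extend it by recursion on formula structure. Since any valuation is a homomorphism from the formula algebra to the algebra of upward-closed subsets of $M_n$, once we arrange that $v_n(p) = u_n(\sigma(p))$ holds for every variable $p$, the equality $v_n(\varphi) = u_n(\sigma(\varphi))$ for arbitrary $\varphi$ follows by a routine induction on the complexity of $\varphi$.

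To define $\sigma(p)$, I will use the observation recorded in the excerpt: $u_n(\alpha^n_J) = \set{\bigwedge I}{I \subseteq J}$ is precisely the principal upper set of $M_n$ generated by $\bigwedge J$. Since $v_n(p)$ is an upper set in the finite poset $M_n$, it decomposes as the union of the principal upper sets generated by its own $M_n$-minimal elements. Enumerating these minimal elements as $\bigwedge J_1, \dots, \bigwedge J_k$, I set
\begin{align*}
  \sigma(p) \assign \alpha^n_{J_{1}} \vee \dots \vee \alpha^n_{J_{k}},
\end{align*}
where the empty disjunction (the case $v_n(p) = \emptyset$) is interpreted as some fixed rank-$1$ contradiction such as $\neg (q \rightarrow q)$. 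By construction $u_n(\sigma(p)) = v_n(p)$.

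It remains to verify that $\sigma(\varphi)$ has finite Kreisel--Putnam rank for every formula $\varphi$. Each $\alpha^n_J = \neg \neg \bigvee_{i \in J} \alpha^n_i$ is of the form $\neg \psi$, so its rank is $1$ by definition; hence every $\sigma(p)$ is a disjunction of at most $2^n - 1$ rank-$1$ formulas and therefore has finite rank. The finiteness of the rank of $\sigma(\varphi)$ for arbitrary $\varphi$ then follows by a straightforward induction on $\varphi$, using the arithmetic rules governing the rank of $\varphi \vee \psi$, $\varphi \wedge \psi$, and $\varphi \rightarrow \psi$. I do not foresee any genuine obstacle: the only substantive observation is the easy combinatorial fact that every upper set in the finite poset $M_n$ is the union of the principal upper sets generated by its minimal elements, combined with the fact, already isolated in the excerpt, that the formulas $\alpha^n_J$ serve as characteristic formulas for these principal upper sets under the universal valuation $u_n$.
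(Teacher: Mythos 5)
Your proof is correct and follows essentially the same route as the paper: you define $\sigma(p)$ as a disjunction of the formulas $\alpha^{n}_{J}$, which act as characteristic formulas of principal upper sets under $u_{n}$, and then conclude by the two routine inductions. The only differences are cosmetic: the paper disjoins $\alpha^{n}_{I}$ over \emph{all} $\bigwedge I \in v_{n}(p)$ rather than just the minimal ones (both work since $v_{n}(p)$ is an upper set), and you additionally make explicit the degenerate case $v_{n}(p) = \emptyset$, which the paper leaves implicit.
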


\begin{proof}
  We define the substitution $\sigma$ on each atom $p$ as follows:
\begin{align*}
  \sigma(p) \assign \bigvee \set{\alpha^{n}_{I}}{\bigwedge I \in v_{n}(p)}.
\end{align*}
  Then $\bigwedge J \in u_{n} (\sigma (p))$ if and only if $\bigwedge J \in u_{n}(\alpha^{n}_{I}) \text{ for some } \bigwedge I \in v_{n}(p)$. But $\bigwedge J \in u_{n}(\alpha^{n}_{I})$ if and only if $\bigwedge I \subseteq \bigwedge J$. Thus $\bigwedge J \in u_{n}(\sigma(p))$ if and only if $\bigwedge J \in v_{n}(p)$, in other words $u_{n}(\sigma(p)) = v_{n}(p)$. Induction over the complexity of $\varphi$ immediately yields that $u_{n}(\sigma(\varphi)) = v_{n}(\sigma(\varphi))$. Since each formula of the form $\sigma(p)$ has finite Kreisel--Putnam rank, so does each formula of the form $\sigma(\varphi)$, by another induction over the complexity of $\varphi$.
\end{proof}

\begin{theorem}
  Medvedev's logic of finite problems is the largest axiomatic extension of the weak Kreisel--Putnam logic which has the disjunction property.
\end{theorem}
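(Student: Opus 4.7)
The plan is to prove the contrapositive: any axiomatic extension $\logic{L}$ of the weak Kreisel--Putnam logic enjoying the disjunction property must satisfy $\logic{L} \subseteq \ML$. Combined with the fact, established above, that $\ML$ itself extends wKP and has DP, this gives the theorem. So I would fix a consistent such $\logic{L}$ and some $\varphi \in \logic{L}$, and aim to derive a contradiction from the assumption $\varphi \notin \ML$.

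First I would localise the failure of $\varphi$ on a single frame. Since $\varphi \notin \ML$ means $\varphi$ is refuted at some point of some $M_{n}$, and every generated subframe of $M_{n}$ is again of the form $M_{k}$, we may assume a valuation $v_{n}$ on $M_{n}$ satisfies $\bigwedge \{ 1, \dots, n \} \notin v_{n}(\varphi)$. The Lemma yields a substitution $\sigma$ with $u_{n}(\sigma(\varphi)) = v_{n}(\varphi)$ and $\sigma(\varphi)$ of finite Kreisel--Putnam rank. Since $\logic{L}$ is closed under substitution, $\sigma(\varphi) \in \logic{L}$; by the Fact, $\sigma(\varphi)$ is wKP-equivalent (and hence $\logic{L}$-equivalent) to a disjunction $\neg \psi_{1} \vee \dots \vee \neg \psi_{m}$, so $\neg \psi_{1} \vee \dots \vee \neg \psi_{m} \in \logic{L}$.

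Next I would extract the contradiction. Iterating DP yields some $i$ with $\neg \psi_{i} \in \logic{L}$. On the semantic side, $\bigwedge \{ 1, \dots, n \} \notin u_{n}(\neg \psi_{1} \vee \dots \vee \neg \psi_{m})$ forces every individual $\neg \psi_{i}$ to fail at the root of $M_{n}$ under $u_{n}$; hence $u_{n}(\psi_{i})$ is a nonempty upset of $M_{n}$ and must contain some coatom $j$, so the restriction of $u_{n}$ to the singleton $\{ j \}$ is a classical valuation satisfying $\psi_{i}$. On the syntactic side, any consistent super-intuitionistic logic closed under substitution is contained in $\logic{CL}$ (substitute each atom by $\top$ or $\bot$ according to a classical refuter, reducing to a variable-free formula intuitionistically equivalent to $\bot$), so $\neg \psi_{i} \in \logic{L}$ would make $\neg \psi_{i}$ a classical tautology -- contradicting the classical model just exhibited.

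The main obstacle is the reduction in the second paragraph: turning the concrete refutation of $\varphi$ on $M_{n}$ into a syntactic disjunction of negations inside $\logic{L}$. This is exactly what the Lemma (universality of $u_{n}$) combined with the Fact (Kreisel--Putnam normal form) delivers, and is the heart of the argument. Once $\neg \psi_{1} \vee \dots \vee \neg \psi_{m}$ lies in $\logic{L}$, closing the argument is routine: it only uses DP, the classical behaviour of Kripke models at maximal points, and the standard inclusion of every consistent super-intuitionistic logic in $\logic{CL}$.
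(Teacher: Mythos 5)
Your proof is correct and follows essentially the same route as the paper: the Lemma plus the Fact reduce the $\ML$-refuted formula to a disjunction of negations lying in $\logic{L}$, and the disjunction property together with the absoluteness of negated theorems across consistent super-intuitionistic logics yields the contradiction. The only differences are cosmetic: you argue contrapositively and justify the Glivenko-type step explicitly (exhibiting a classical countermodel to $\neg\psi_{i}$ at a maximal point of $M_{n}$ and using $\logic{L} \subseteq \logic{CL}$), where the paper simply cites that all non-trivial super-intuitionistic logics agree on the provability of negated formulas.
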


\begin{proof}
  Let $\logic{L}$ be such an extension of the weak Kreisel--Putnam logic. Suppose that $\nvdash_{\ML} \varphi(p_1, \dots, p_n)$. We show that $\nvdash_{\logic{L}} \varphi(p_1, \dots, p_n)$.

  By the above lemma, there is a substitution $\sigma$ such that $\sigma(p_{i})$ has finite Kreisel--Putnam rank for each $p_{i}$ and $\nvdash_{\ML} \varphi(\sigma(p_{1}), \dots, \sigma(p_{n}))$. By induction over the complexity of~$\varphi$, the formula $\varphi(\sigma(p_{1}), \dots, \sigma(p_{n}))$ has finite Kreisel--Putnam rank. That is, $\varphi(\sigma(p_{1}), \dots, \sigma(p_{n}))$ is equivalent to $\neg \psi_{1} \vee \dots \vee \neg \psi_{k}$ for some~$k$ in the weak Kreisel--Putnam logic, and therefore also in $\ML$ and in $\logic{L}$. Then $\nvdash_{\ML} \neg \psi_{1} \vee \dots \vee \neg \psi_{k}$, so $\nvdash_{\ML} \neg \psi_{i}$ for each $\psi_{i}$. All non-trivial super-intuitionistic logics agree on the provability of negated formulas, thus $\nvdash_{\logic{L}} \neg \psi_{i}$ for each $\psi_{i}$. The disjunction property of $\logic{L}$ yields that $\nvdash_{\logic{L}} \neg \psi_{1} \vee \dots \vee \neg \psi_{k}$, hence $\nvdash_{\logic{L}} \varphi(\sigma(p_{1}), \dots, \sigma(p_{n}))$. It follows that $\nvdash_{\logic{L}} \varphi(p_{1}, \dots, p_{n})$.
\end{proof}

  A super-intuitionistic logic $\logic{L}$ is \emph{structurally complete} if each admissible rule $\varphi \vdash \psi$ of $\logic{L}$ is valid in $\logic{L}$, where a rule $\varphi \vdash \psi$ is \emph{admissible} in $\logic{L}$ in case
\begin{align*}
  \vdash_{\logic{L}} \sigma(\varphi) \implies {\vdash_{\logic{L}} \sigma(\psi) \text{ for each substitution $\sigma$.}}
\end{align*}

\begin{theorem}
  Medvedev's logic of finite problems is structurally complete.
\end{theorem}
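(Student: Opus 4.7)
The plan is to mirror the strategy of the preceding theorem. Given the rule $\varphi \vdash \psi$ admissible in~$\ML$, I will assume for contradiction that $\nvdash_{\ML} \varphi \to \psi$ and aim to produce a substitution~$\tau$ with $\vdash_{\ML} \tau(\varphi)$ and $\nvdash_{\ML} \tau(\psi)$. First I invoke the lemma to obtain a substitution $\sigma$ of finite Kreisel--Putnam rank such that $u_n$ refutes $\sigma(\varphi \to \psi)$ at some point $a = \bigwedge I$ in some $M_n$. By the Fact, $\sigma(\varphi)$ and $\sigma(\psi)$ are $\ML$-equivalent to disjunctions of negations $\neg A_1 \vee \dots \vee \neg A_k$ and $\neg B_1 \vee \dots \vee \neg B_l$; the Kripke semantics of disjunction at $a$ yields some $i_0$ with $u_n(a, \neg A_{i_0})$, so that $A_{i_0}$ is refuted throughout the generated subframe above~$a$, whereas each $B_j$ is satisfied at some point above~$a$.

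Next I reduce further. Since all non-trivial super-intuitionistic logics agree on the provability of negated formulas and $\ML$ has the disjunction property, it suffices to produce a further substitution $\tau'$ such that $\tau'(A_{i_0})$ is intuitionistically inconsistent while each $\tau'(B_j)$ remains intuitionistically satisfiable. Indeed, the first property then gives $\vdash_{\ML} \neg \tau'(A_{i_0})$ and hence $\vdash_{\ML} \tau'(\sigma(\varphi))$, while the second gives $\nvdash_{\ML} \neg \tau'(B_j)$ for every~$j$ and hence $\nvdash_{\ML} \tau'(\sigma(\psi))$. The composite $\tau = \tau' \circ \sigma$ will then witness non-admissibility.

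The main obstacle is the construction of~$\tau'$. The semantic input is that at each maximal element $\{i\}$ above $a$ in $M_n$ (i.e.\ for $i \in I$), Kripke semantics collapses to a classical truth assignment~$v_i$ on the atoms of the~$A_j$ and~$B_j$; under each such $v_i$ the formula $A_{i_0}$ is classically false, and by monotonicity each~$B_j$ is classically true under at least one~$v_i$. My plan is to introduce fresh variables $r_1, \dots, r_s$ with $2^s \geq |I|$ and to define $\tau'$ on each atom as a Boolean combination of the~$r_j$ chosen so that every classical valuation of the~$r_j$ induces, via~$\tau'$, an assignment of the original atoms lying in $\{v_i : i \in I\}$. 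Then $\tau'(A_{i_0})$ will be classically false under every valuation, hence classically (and so by Glivenko's theorem intuitionistically) inconsistent, while for each~$B_j$ some valuation of the~$r_j$ induces an assignment $v_{i_j}$ satisfying~$B_j$, preserving intuitionistic satisfiability.
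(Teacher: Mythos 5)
Your proof is correct, but it takes a genuinely different route through the crucial step. Both arguments start the same way: use the Lemma to pull the refuting valuation back to $u_{n}$ via a substitution $\sigma$ whose image has finite Kreisel--Putnam rank. The paper then shows that $\sigma$ alone already does the job, i.e.\ $\vdash_{\ML} \sigma(\varphi)$, by a frame-theoretic argument: any valuation $w_{m}$ on $M_{m}$ refuting $\sigma(\varphi)$ would induce, via the formulas $\alpha^{n}_{i}$, a p-morphism $f \colon M_{m} \to M_{n}$ transporting the counterexample to $u_{n}$, contradicting $u_{n}(\sigma(\varphi)) = v_{n}(\varphi)$. You instead compose $\sigma$ with a second, ``classicizing'' substitution $\tau'$ into fresh variables and re-run the machinery of Levin's theorem (the Fact, Glivenko's theorem, agreement of non-trivial super-intuitionistic logics on negated formulas, and the disjunction property of $\ML$): since every classical valuation of the fresh variables induces one of the finitely many classical assignments realized at the maximal points above the refuting world, $\tau'(A_{i_{0}})$ becomes classically, hence intuitionistically, inconsistent, while each $\tau'(B_{j})$ stays classically satisfiable. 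Your construction of $\tau'$ is sound: any surjection from $\{0,1\}^{s}$ onto $I$ is Boolean-definable coordinatewise, and it is precisely the surjectivity (worth stating explicitly) that keeps the $B_{j}$ satisfiable. The paper's route is shorter and yields the stronger information that $\sigma$ is itself a unifier of $\varphi$ in $\ML$; yours avoids the p-morphism computation entirely and makes both theorems of the paper run on exactly the same engine, at the cost of invoking the disjunction property of $\ML$ and Glivenko. One small repair: refutation of $\sigma(\varphi \to \psi)$ at $a$ only guarantees a point $b \geq a$ where $\sigma(\varphi)$ holds and $\sigma(\psi)$ fails, so you should replace $a$ by that $b$ before reading off the classical assignments at the maximal points above it.
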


\begin{proof}
  Suppose that $\varphi \nvdash_{\ML} \psi$, as witnessed by a Kripke valuation $v_{n}$ on the Medvedev frame~$M_{n}$. Restricting to a generated subframe if necessary, we may assume that  $\varphi$ holds at each world of $M_{n}$ and $\psi$ fails at some world of $M_{n}$ in the valuation $v_{n}$. To prove the structural completeness of $\ML$, it will suffice to find a substitution~$\sigma$ such that $\vdash_{\ML} \sigma(\varphi)$ and $\nvdash_{\ML} \sigma(\psi)$. Let us take the substitution~$\sigma$ from the previous lemma. Then $\nvdash_{\ML} \sigma (\psi)$, as witnessed by the valuation $u_{n}$ on~$M_{n}$. It~remains to prove that $\vdash_{\ML} \sigma(\varphi)$.

  Suppose to the contrary that $\nvdash_{\ML} \sigma (\varphi)$, as witnessed by a valuation $w_{m}$ on the frame $M_{m}$. We then define the map $f\colon M_{m} \to M_{n}$ as follows:
\begin{align*}
  f (i) & \assign \text{the unique $j \in M_{n}$ such that } i \in w_{m}(\alpha_{j}), &
  f (\bigwedge I) & \assign \bigwedge f[I].
\end{align*}
  This map is well defined because each element of $M_{m}$ is a meet of a unique non-empty set of maximal elements. For each maximal element $j$ of $M_{m}$
\begin{align*}
  f(j) \in u_{n}(\alpha_{i}) & \iff j \in w_{m}(\alpha_{i}),
\end{align*}
  therefore for each non-empty $J \subseteq \{ 1, \dots, m \}$
\begin{align*}
  f(\bigwedge J) \in u_{n}(\alpha_{I}) & \iff \bigwedge I \leq f(\bigwedge J) = \bigwedge f[J] \\
  & \iff f[J] \subseteq I \\
  & \iff J \subseteq w_{m}(\alpha_{I}) \\
  & \iff \bigwedge J \in w_{m}(\alpha_{I}).
\end{align*}
  It follows that for each atom $p$
\begin{align*}
  f(\bigwedge J) \in u_{n}(\sigma(p)) \iff \bigwedge J \in w_{m}(\sigma(p)).
\end{align*}
 
  Now observe that the map $f$ is a \emph{p-morphism}: it is monotone, and if $f(\bigwedge I) = \bigwedge J \in M_{n}$ and $\bigwedge J \leq \bigwedge J'$ in $M_{n}$, then $\bigwedge I \leq \bigwedge I'$ and $f(\bigwedge I') = \bigwedge J'$ for some $\bigwedge I' \in M_{m}$. Namely, take $I' \assign I \cap f^{-1}[J']$. Then $\bigwedge I \leq \bigwedge I'$ and $f[I'] = J'$, since $J' \subseteq J$ and $f[I] = J$, and therefore $f(\bigwedge I') = \bigwedge J'$.
  
  Because $f$ is a p-morphism, it follows by induction over the complexity of~$\chi$ that
\begin{align*}
  f(\bigwedge J) \in u_{n}(\sigma(\chi)) \iff \bigwedge J \in w_{m}(\sigma(\chi))
\end{align*}
  for each formula $\chi$. Because $\sigma(\varphi)$ fails in some world of $M_{m}$ in the valuation $w_{m}$, also $\sigma(\varphi)$ must also fail in some world of $M_{n}$ in the valuation $u_{n}$. But this is a contradiction: $u_{n}(\sigma(\varphi)) = v_{n}(\varphi)$ and $\varphi$ holds in~$v_{n}$ at each world of $M_{n}$.
\end{proof}


\begin{thebibliography}{10}

\bibitem{Levin}
Leonid A. Levin.
\newblock Some syntactic theorems on the calculus of finite problems of Ju. T. Medvedev.
\newblock {\em Soviet Mathematics Doklady}, 10:288--290, 1969.

\bibitem{Maksimova}
Larisa L. Maksimova.
\newblock On maximal intermediate logics with the disjunction property.
\newblock {\em Studia Logica}, 45(1):69--75, 1986.

\bibitem{Prucnal}
Tadeusz Prucnal.
\newblock Structural completeness of Medvedev’s propositional calculus.
\newblock {\em Reports on Mathematical Logic}, 6:103--105, 1976.

\bibitem{Skvortsov}
Dmitrij Skvortsov.
\newblock On Prucnal's theorem.
\newblock In Ewa Or{\l}owska, editor, {\em Logic at work: Essays dedicated to the memory of Helena Rasiowa}, volume~24 of {\em Studies in Fuzziness and Soft Computing}, pages 222--228. Physica Verlag Heidelberg, 1999.

\end{thebibliography}
\end{document}